\documentclass[reqno,12pt]{amsart}
%
%
%
\def\vint_#1{\mathchoice%
          {\mathop{\kern 0.2em\vrule width 0.6em height 0.69678ex depth -0.58065ex
                  \kern -0.8em \intop}\nolimits_{\kern -0.4em#1}}%
          {\mathop{\kern 0.1em\vrule width 0.5em height 0.69678ex depth -0.60387ex
                  \kern -0.6em \intop}\nolimits_{#1}}%
          {\mathop{\kern 0.1em\vrule width 0.5em height 0.69678ex
              depth -0.60387ex
                  \kern -0.6em \intop}\nolimits_{#1}}%
          {\mathop{\kern 0.1em\vrule width 0.5em height 0.69678ex depth -0.60387ex
                  \kern -0.6em \intop}\nolimits_{#1}}}
\def\vintslides_#1{\mathchoice%
          {\mathop{\kern 0.1em\vrule width 0.5em height 0.697ex depth -0.581ex
                  \kern -0.6em \intop}\nolimits_{\kern -0.4em#1}}%
          {\mathop{\kern 0.1em\vrule width 0.3em height 0.697ex depth -0.604ex
                  \kern -0.4em \intop}\nolimits_{#1}}%
          {\mathop{\kern 0.1em\vrule width 0.3em height 0.697ex de pth -0.604ex
                  \kern -0.4em \intop}\nolimits_{#1}}%
          {\mathop{\kern 0.1em\vrule width 0.3em height 0.697ex depth -0.604ex
                  \kern -0.4em \intop}\nolimits_{#1}}}

\usepackage{a4wide}
\usepackage{mathtools}
\usepackage{amsthm}
\usepackage{amssymb}
\usepackage{hyperref}
\usepackage{color}
\mathtoolsset{showonlyrefs}
\usepackage[obeyFinal]{todonotes}
\usepackage{csquotes}
\usepackage{graphicx}
\usepackage{epsfig,color}
\usepackage{enumitem}

\numberwithin{equation}{section}
\newtheorem{theorem}{Theorem}[section]
\newtheorem{lemma}[theorem]{Lemma}

\theoremstyle{definition}
\newtheorem{definition}[theorem]{Definition}

\theoremstyle{remark}

\renewcommand{\d}{{\mathrm d}}
\newcommand{\R}{\mathbb{R}}
\renewcommand{\S}{\mathbb{S}}
\newcommand{\N}{\mathbb{N}}

\newcommand{\F}{\mathcal{F}}
\newcommand{\B}{\mathcal{B}}

\newcommand{\spt}{\mathrm{spt}}
\newcommand{\Reg}{\mathrm{Reg}}
\newcommand{\Sing}{\mathrm{Sing}}
\newcommand{\opt}{\mathrm{Opt}}
\newcommand{\id}{\mathrm{id}}
\newcommand{\geo}{\mathrm{Geo}}

\begin{document}

\title[Optimal transport maps on Alexandrov spaces revisited]
{Optimal transport maps on Alexandrov spaces revisited}

\author{Tapio Rajala}
\author{Timo Schultz}

\address{University of Jyvaskyla\\
         Department of Mathematics and Statistics \\
         P.O. Box 35 (MaD) \\
         FI-40014 University of Jyvaskyla \\
         Finland}
\email{tapio.m.rajala@jyu.fi}
\email{timo.m.schultz@student.jyu.fi}

\thanks{Both authors partially supported by the Academy of Finland.}
\subjclass[2000]{Primary 53C23. Secondary 49K30}
\keywords{Alexandrov spaces, optimal mass transportation, rectifiability}
\date{\today}


\begin{abstract}
We give an alternative proof for the fact that in $n$-dimensional Alexandrov spaces with curvature bounded below there exists a unique optimal transport plan from any purely $(n-1)$-unrectifiable starting measure, and that this plan is induced by an optimal map.
\end{abstract}

\maketitle

\section{Introduction}


The problem of optimal mass transportation has a long history, starting from the work of Monge \cite{Monge} in the late 18th century. In the original formulation of the problem, nowadays called the Monge-formulation, the problem is to find the transport map $T$ minimizing the transportation cost
\begin{equation}\label{eq:Monge}
\int_{\R^n}c(x,T(x))\,\d\mu_0(x),
\end{equation}
among all Borel maps $T \colon \R^n \to \R^n$ transporting a given probability measure $\mu_0$ to another given probability measure $\mu_1$, that is, $T_\sharp\mu_0 = \mu_1$. In the original problem of Monge, the cost function $c(x,y)$ was the Euclidean distance. Later, other cost functions have been considered, in particular much of the study has involved the distance squared cost, $c(x,y) = |x-y|^2$, which is the cost studied also in this paper.

In the Monge-formulation \eqref{eq:Monge} of the optimal mass transportation problem the class of admissible maps $T$ that send $\mu_0$ to $\mu_1$ is in most cases not closed in any suitable topology. To overcome this problem, Kantorovich \cite{Kantorovitch1,Kantorovitch2} considered a larger class of optimal transports, namely, measures $\pi$ on $\R^n \times \R^n$ such that the first marginal of $\pi$ is $\mu_0$ and the second is $\mu_1$. Such measures $\pi$ are called transport plans. Kantorovich's relaxation leads to the so-called Kantorovich-formulation of the problem,
\begin{equation}\label{eq:Kantorovich}
\inf_{\pi}\int_{\R^n\times \R^n}c(x,y)\,\d\pi(x,y).
\end{equation}
Due to the closedness of the admissible transport plans and the lower semi-continuity of the cost, minimizers exist in the Kantorovich-formulation under very mild assumptions on the underlying space and the cost $c$. 

For the quadratic cost in the Euclidean space, it was shown independently by Brenier \cite{Brenier}
and Smith and Knott \cite{SmithKnott} that having $\mu_0$ absolutely continuous with respect to the Lebesgue measure guarantees that the optimal transport plans (minimizer of \eqref{eq:Kantorovich}) are unique and given by a transport map. Moreover, the optimal transport map is given by a gradient of a convex function.

The results of Brenier and of Smith and Knott have been generalized in many ways. The most important directions of generalization have been: going from the underlying space $\R^n$ to other metric spaces, considering other cost functions, and relaxing the assumption of the starting measure being absolutely continuous with respect to the reference measure (here the Lebesgue measure). In this paper, we study the direction of relaxing the absolute continuity in a more general metric space setting, the Alexandrov spaces. 
We note that one should be able to generalize our proof for more general costs, such as the distance to a power $p\in (1,\infty)$. In order to keep the presentation simpler, we concentrate here on the distance squared cost.

The existence of optimal transportation maps in Alexandrov spaces with curvature bounded below for starting measures that are absolutely continuous with respect to the reference Hausdorff measure was proven by Bertrand \cite{Bertrand}. 
Later Bertrand improved this result \cite{Bertrand_habi} by relaxing the assumption on the starting measure to give zero measure to $c-c$-hypersurfaces.
Here we provide an alternative proof for the result of Bertrand under the slightly stronger assumption on the starting measure of pure $(n-1)$-unrectifiability (see Definition \ref{def:unrectifiability} for the definition of purely $(n-1)$-unrectifiability).

\begin{theorem}\label{thm:main}
 Let $(X,d)$ be an $n$-dimensional Alexandrov space with curvature bounded below. Then for any pair of measures $\mu_0,\mu_1 \in \mathcal P_2(X)$ such that $\mu_0$ is purely $(n-1)$-unrectifiable, there exists a unique optimal transport plan from $\mu_0$ to $\mu_1$ and this transport plan is induced by a map.
\end{theorem}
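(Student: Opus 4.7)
The plan is to reduce Theorem \ref{thm:main} to a rectifiability claim about the set where a Kantorovich potential has non-unique $c$-superdifferential. Existence of an optimal plan $\pi$ is standard by compactness and lower semicontinuity, and Kantorovich duality produces a $c$-concave potential $\varphi\colon X\to\R$ (which exists since $\mu_0,\mu_1$ have finite second moments) such that every optimal plan is concentrated on
\[
\partial^c\varphi=\bigl\{(x,y)\in X\times X: \varphi(x)+\varphi^c(y)=\tfrac12 d(x,y)^2\bigr\}.
\]
Since $\tfrac12 d(\cdot,y)^2$ is Lipschitz and semiconcave on bounded subsets of an Alexandrov space with curvature bounded below, $\varphi$ is locally semiconvex up to a quadratic correction, and a subdifferential inside the tangent cone $T_xX$ is defined at every point.

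The heart of the argument is to show that the bad set
\[
\Sigma=\{x\in X: \partial^c\varphi(x)\text{ is not a singleton}\}
\]
is contained in a countably $(n-1)$-rectifiable subset of $X$. I would split this into two parts. First, the metric-singular set of $X$ is itself countably $(n-1)$-rectifiable by the structure theory of Alexandrov spaces. Second, on the regular part I would prove that a locally semiconvex function fails to be differentiable, in the tangent-cone sense, only on a countably $(n-1)$-rectifiable set, the Alexandrov analogue of the classical fact that a convex function on $\R^n$ is non-differentiable on an $(n-1)$-rectifiable set. At a regular point $x$ where $\varphi$ is differentiable, the first-variation formula for $\tfrac12 d(\cdot,y)^2$ pins down both the initial direction at $x$ of a geodesic to any target $y\in\partial^c\varphi(x)$ and the length of that geodesic, forcing $\partial^c\varphi(x)$ to be a singleton.

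Pure $(n-1)$-unrectifiability of $\mu_0$ then gives $\mu_0(\Sigma)=0$. A Borel selection provides a map $T$ with $T(x)\in\partial^c\varphi(x)$ outside $\Sigma$, and on this set the selection is unique. Every optimal plan must concentrate on the graph of $T$, so $\pi=(\id\times T)_\sharp\mu_0$, which simultaneously yields the existence of a representing map and the uniqueness of the plan. The main obstacle is the $(n-1)$-rectifiability of the non-differentiability set of $\varphi$ on the regular part: the tangent cone at a regular point is only close to $\R^n$ in an asymptotic sense, so one must combine a blow-up argument with the almost-Riemannian structure of the regular set in order to push an $\R^n$-style Zajicek argument through. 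The passage from tangent-cone differentiability to uniqueness of the $c$-superdifferential is a second delicate point and relies on triangle comparison to control how rapidly distinct geodesics from $x$ can separate, but it is less demanding than the rectifiability step itself.
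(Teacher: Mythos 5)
Your approach is genuinely different from the paper's, but it is essentially Bertrand's original strategy that the paper explicitly aims to replace, and it leaves the central technical step unproved. You route through Kantorovich duality: produce a $c$-concave potential $\varphi$, show $\varphi$ is locally semiconvex, and argue that its non-differentiability set is countably $(n-1)$-rectifiable so that $\mu_0$-a.e.~point has a unique $c$-superdifferential. This is precisely the scheme Bertrand followed in \cite{Bertrand_habi}, and the key lemma you flag as ``the main obstacle'' --- an Alexandrov analogue of Zaj\'\i\v cek's theorem on non-differentiability sets of convex functions, valid on the regular part of $X$ --- is exactly where Bertrand had to invoke Perelman's DC calculus. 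A $(1+\varepsilon)$-biLipschitz chart is not enough here: differentiability is not a biLipschitz-invariant notion, so you cannot just push $\varphi$ forward by the chart and apply the Euclidean Zaj\'\i\v cek result. Bertrand's DC charts carry second-order (BV-Hessian) information that makes the transfer legitimate; your sketch (``blow-up argument combined with the almost-Riemannian structure'') does not supply that, so the rectifiability of the bad set remains an assertion rather than a proof. Until that step is filled in, the argument has a genuine gap.

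By contrast, the paper deliberately avoids Kantorovich potentials altogether. It assumes the optimal plan is not a map, extracts a positive-measure set of double points, and uses only $c$-cyclical monotonicity of $\spt(\pi)$ (Lemma \ref{lma:localcyclical}) together with the quantitative Euclidean inequality of Lemma \ref{lma:cyclicalcontradicition}. The unrectifiability of $\mu_0$ enters through Lemma \ref{lma:unrecteverywhere} (purely $(n-1)$-unrectifiable measures have mass in every cone direction), which supplies the geometric configuration of four points that violates cyclical monotonicity after contracting along geodesics and passing through a $(1+\varepsilon)$-biLipschitz chart. This buys a proof that is self-contained and elementary --- no duality, no semiconvexity regularity theory, no DC calculus --- at the cost of the slightly stronger hypothesis of pure $(n-1)$-unrectifiability (your route, once completed, would give the sharper ``$c$-$c$ hypersurface'' condition). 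Also note that the portions of your sketch that are sound --- the first-variation argument showing a unique gradient forces a unique target, and the rectifiability of $\Sing(X)$ --- do appear in the paper in a different role (Theorems \ref{thm:epsilonneighborhood} and \ref{thm:singsize}); it is the Zaj\'\i\v cek-type rectifiability on the regular set that is absent from the paper precisely because the contradiction argument makes it unnecessary.
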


The contribution of this paper is to provide a different approach to showing the existence and uniqueness of optimal transport maps than what was used by Bertrand in \cite{Bertrand,Bertrand_habi}. In \cite{Bertrand}, Bertrand used the local $(1+\varepsilon)$-biLipschitz maps to $\R^n$ on the regular set of $X$, and the general existence of Kantorovich potentials and their Lipschitzness. Since the singular set of $X$ is at most $(n-1)$-dimensional, and the Rademacher's theorem on $\R^n$ can be restated in $X$ via the biLipschitz maps, Bertrand concluded that the optimal transport is concentrated on a graph that is given by applying the exponential map to the gradient of the Kantorovich potential. 
In \cite{Bertrand_habi}, Bertrand considered the problem in boundaryless Alexandrov spaces. He used Perelman's DC calculus to translate the problem to differentiability of convex functions on Euclidean spaces. Then the result follows from the characterization of nondifferentiability points of convex functions due to Zaj\'\i \v cek \cite{zajicek}.

In this paper, 
we translate a contradiction argument (Lemma \ref{lma:cyclicalcontradicition}) from the Euclidean space (which uses just cyclical monotonicity in certain geometric configurations) to the space $X$ via the $(1+\varepsilon)$-biLipschitz charts. In order to use the contradiction argument, we need to get all the used distances to be comparable. For this we use the fact that the directions of geodesics are well-defined in the biLipschitz charts (Theorem \ref{thm:epsilonneighborhood}) and thus we can contract along the geodesics without changing the geometric configuration too much. Finally, the geometric configurations that result in the contradiction via cyclical monotonicity are given by the pure $(n-1)$-unrectifiability (Lemma \ref{lma:unrecteverywhere}).

Let us comment on the history of the sufficient assumptions on $\mu_0$.
The assumption of pure $(n-1)$-unrectifiability was shown by McCann \cite{McCann} to be sufficient for the existence of optimal maps in the case of Riemannian manifolds. A sharper condition based on the characterization by Zaj\'\i \v cek \cite{zajicek}
of the set of nondifferentiability points of convex functions 
was first used in the Euclidean context by Gangbo and McCann \cite{GangboMcCann} when they showed that having an initial measure that gives zero mass to $c-c$ -hypersurfaces is sufficient to give the existence of optimal maps. It was then shown by Gigli \cite{Gigli2} that even in the Riemannian manifold context the sharp requirement for the starting measure to have optimal maps for any target measure is indeed that it gives zero measure to $c-c$ -hypersurfaces. 
It still remains open whether zero measure of $c-c$ -hypersurfaces also gives a full characterization in the case of Alexandrov spaces. One of the directions, the sufficiency, was obtained by Bertrand \cite{Bertrand_habi}.

The existence of optimal maps has been studied in wider classes of metric measure spaces that satisfy some form of Ricci curvature lower bounds or weak versions of measure contraction property. 
These classes include $CD(K,N)$-spaces that were introduced by Lott and Villani \cite{LottVillani}, and by Sturm \cite{SturmI,SturmII}, $MCP(K,N)$-spaces (see Ohta \cite{Ohta}), and $RCD(K,N)$ spaces that were first introduced by  Ambrosio, Gigli and Savar\'e \cite{AmbrosioGigliSavare2} (see also the improvements and later work by Ambrosio, Gigli, Mondino and Rajala \cite{AmbrosioGigliMondinoRajala}, Erbar, Kuwada and Sturm \cite{ErbarKuwadaSturm} and Ambrosio, Mondino and Savar\'e \cite{AmbrosioMondinoSavare}). All of these classes contain Alexandrov spaces with curvature lower bounds, see Petrunin \cite{Petrunin}.

It was first shown by Gigli \cite{Gigli}, that in nonbranching $CD(K,N)$-spaces you do have the existence of optimal maps provided that the starting measure is absolutely continuous with respect to the reference measure. In all the subsequent work, the assumption has been the same for the starting measure, and it would be interesting to see if it can be relaxed also in the more general context of metric measure spaces with Ricci curvature lower bounds.

Also a metric version of Brenier's theorem was studied by Ambrosio, Gigli and Savar\'e \cite{AmbrosioGigliSavare1}. They did not obtain the existence of optimal maps, but showed that at least the transportation distance is given by the Kantorovich potential. Later, Ambrosio and Rajala \cite{AmbrosioRajala} showed that under sufficiently strong nonbranching assumptions one can conclude the existence of optimal maps.

Rajala and Sturm \cite{RajalaSturm} noticed that strong $CD(K,\infty)$ spaces, and hence $RCD(K,\infty)$ spaces are at least essentially nonbranching, and that this weaker form of nonbranching is sufficient for carrying out Gigli's proof. This result was later improved by Gigli, Rajala and Sturm \cite{GigliRajalaSturm}. Essential nonbranching was then studied together with the measure contraction property $MCP(K,N)$ by Cavalletti and Huesmann \cite{CavallettiHuesmann} and Cavalletti and Mondino \cite{CavallettiMondino}, and finally it was shown by Kell \cite{Kell} that under a weak type measure contraction property, the essential nonbranching characterizes the uniqueness of optimal transports and that the unique optimal transport is given by a map for absolutely continuous starting measures.

The existence of optimal transport maps in $CD(K,N)$ spaces without any extra assumption on nonbranching is still an open problem. An intermediate definition between $CD(K,N)$ and essentially nonbranching $CD(K,N)$, called very strict $CD(K,N)$, was studied by Schultz \cite{Schultz}. He showed that in these spaces one still has optimal transport maps even if the space could be highly branching and the optimal plans non-unique. It is also worth noting that if one drops the assumption of essential nonbranching for $MCP(K,N)$, then optimal transport maps need not exist. This is seen from the examples by Ketterer and Rajala \cite{KettererRajala}.
\medskip

The paper is organized as follows. In Section \ref{sec:preli} we recall basic things about rectifiability, Alexandrov spaces and optimal mass transportation. While doing this, we also present a few facts that easily follow from well-known results: purely $n-1$-unrectifiable measures have mass in all directions (Lemma \ref{lma:unrecteverywhere}), the singular set in an Alexandrov space is $(n-1)$-rectifiable (Theorem \ref{thm:singsize}), gradients of geodesics exist in charts in Alexandrov spaces (Theorem \ref{thm:epsilonneighborhood}) and the failure of cyclical monotonicity persists after small perturbations (Lemma \ref{lma:cyclicalcontradicition}). In Section \ref{sec:proof} we then put these things together and prove Theorem \ref{thm:main}.

\section{Preliminaries}\label{sec:preli}
In this paper $(X,d)$ always refers to a complete and locally compact length space.
By a length space we mean a metric space  where the distance between any two points $x$ and $y$ is equal to the infimum of lengths  of curves connecting $x$ and $y$. 
By the Hopf-Rinow-Cohn-Vossen Theorem, our spaces $(X,d)$ are then geodesic, proper and, in particular, separable. A space is called geodesic, if any two points in the space can be connected by a geodesic.
By a geodesic we mean a constant speed length minimizing curve $\gamma\colon [0,1]\to X$. 
Notice that we parametrize all the geodesics by the unit interval.
We denote the space of geodesics of $X$ by $\geo(X)$ and equip it with the supremum-distance.
By a (geodesic) triangle $\Delta (x,y,z)$ we mean points $x,y,z\in X$ and any choice of geodesics $[x,y]$, $[y,z]$ and $[x,z]$ pairwise connecting them.

\subsection{Rectifiability}

For our Theorem \ref{thm:main} the starting measure $\mu_0$ is diffused enough if it is purely $n-1$-unrectifiable. Let us recall this notion.

\begin{definition}\label{def:unrectifiability}
A set $A \subset X$ is called \emph{(countably) $k$-rectifiable} if there exist Lipschitz maps $f_i \colon E_i \to X$ from Borel sets $E_i \subset \R^k$ for $i \in \N$, such that $A \subset \bigcup_{i\in\N}f_i(E_i)$.

A measure $\mu$ is called \emph{purely $k$-unrectifiable}, if $\mu(A) = 0$ for every $k$-rectifiable set $A$.
\end{definition}

The property of purely unrectifiable measures that we use is that they have mass in all directions. This is made precise using (one-sided) cones that are defined as follows.
Given $x \in \R^n$, $\theta \in \S^{n-1}$, $\alpha > 0$ and $r> 0$, we denote the open cone at $x$ in direction $\theta$ with opening angle $\alpha$, by
\[
C(x,\theta,\alpha) := \left\{y \in \R^n\,:\, \langle y-x, \theta\rangle > \cos(\alpha)|y-x|\right\}.
\]

\begin{lemma}\label{lma:unrecteverywhere}
 Let $\mu$ be a purely $(n-1)$-unrectifiable measure on $\R^n$ and let $E \subset \R^n$ with $\mu(E) > 0$. Then at $\mu$-almost every $x \in E$ we have $C(x,\theta, \alpha) \cap B(x,r) \cap E \ne \emptyset$
 for all $\theta \in \S^{n-1}$, $\alpha > 0$ and $r> 0$.
\end{lemma}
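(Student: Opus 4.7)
The plan is to fix a triple $(\theta,\alpha,r)$, show that the corresponding ``bad set''
\[
N_{\theta,\alpha,r} := \{x \in E : C(x,\theta,\alpha) \cap B(x,r) \cap E = \emptyset\}
\]
is $(n-1)$-rectifiable and hence $\mu$-null, and then upgrade the statement from ``some $(\theta,\alpha,r)$'' to ``all $(\theta,\alpha,r)$'' by a countable approximation in $\theta$, $\alpha$, $r$.

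For the rectifiability, assume $\alpha \in (0,\pi/2)$ (the larger $\alpha$ follow for free since larger cones contain smaller ones). If $x,y \in N_{\theta,\alpha,r}$ satisfy $|x-y|<r$, then applying the defining property at $x$ gives $\langle y-x,\theta\rangle \leq \cos(\alpha)|y-x|$, and applying it at $y$ gives the reverse inequality, so
\[
|\langle y-x,\theta\rangle| \leq \cos(\alpha)|y-x|.
\]
Letting $P$ be the orthogonal projection onto $\theta^\perp \cong \R^{n-1}$, this forces $|P(x)-P(y)| \geq \sin(\alpha)|x-y|$. Covering $\R^n$ by countably many balls of radius $r/2$ and noting that on each such ball $P$ restricted to $N_{\theta,\alpha,r}$ is injective with inverse of Lipschitz constant $1/\sin(\alpha)$, we exhibit $N_{\theta,\alpha,r}$ as contained in a countable union of Lipschitz images of Borel subsets of $\R^{n-1}$. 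Hence $N_{\theta,\alpha,r}$ is $(n-1)$-rectifiable, and pure $(n-1)$-unrectifiability of $\mu$ yields $\mu(N_{\theta,\alpha,r})=0$.

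To handle all parameters simultaneously, I would fix a countable dense subset $\{\theta_k\}_{k\in\N} \subset \S^{n-1}$ and set
\[
N := \bigcup_{k \in \N}\ \bigcup_{\alpha \in \mathbb{Q} \cap (0,\pi/2)}\ \bigcup_{r \in \mathbb{Q}\cap(0,\infty)} N_{\theta_k,\alpha,r},
\]
which is $\mu$-null as a countable union of null sets. For $x \in E \setminus N$ and arbitrary $\theta \in \S^{n-1}$, $\alpha,r>0$, I reduce to $\alpha<\pi/2$ as above, then choose rationals $\alpha' \in (0,\alpha)$ and $r'\in(0,r)$, and pick $\theta_k$ close enough to $\theta$ that $|\theta-\theta_k| < \cos(\alpha')-\cos(\alpha)$. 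A direct computation using $\langle y-x,\theta\rangle \geq \langle y-x,\theta_k\rangle - |\theta-\theta_k|\,|y-x|$ then shows $C(x,\theta_k,\alpha') \subset C(x,\theta,\alpha)$, and since $x \notin N_{\theta_k,\alpha',r'}$, the intersection $E \cap B(x,r') \cap C(x,\theta_k,\alpha')$ is nonempty and lies inside the desired $E \cap B(x,r) \cap C(x,\theta,\alpha)$.

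I do not anticipate a serious obstacle; the only delicate point is the $\theta$-density argument connecting the ``fixed $\theta$'' null sets to the ``for all $\theta$'' statement, and this is handled by the elementary cone-inclusion estimate above. The geometric core is the standard observation that a set which, at each of its points, locally avoids a fixed cone is sandwiched between two Lipschitz graphs over the orthogonal hyperplane, and so is $(n-1)$-rectifiable.
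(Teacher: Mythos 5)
Your proof is correct and takes essentially the same approach as the paper's: both rest on the observation that a set which avoids a fixed cone at each of its points projects biLipschitzly onto the orthogonal hyperplane (hence is $(n-1)$-rectifiable), combined with a countable reduction over the cone parameters. The only difference is presentational: the paper argues by contradiction from a positive-measure bad set and peels off the parameters one by one via monotonicity and density, whereas you argue directly by showing the fixed-parameter bad sets $N_{\theta,\alpha,r}$ are null and passing to a countable union, making the cone-inclusion step explicit.
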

\begin{proof}
 Suppose that there is a subset $E_0 \subset E$ with $\mu(E_0)>0$ such that the conclusion fails, i.e. for every $x \in E_0$ there exist $\theta_x \in \S^{n-1}$, $\alpha_x > 0$ and $r_x> 0$ such that $C(x,\theta_x, \alpha_x) \cap B(x,r_x) \cap E = \emptyset$. Since
 \[
 C(x,\theta, \alpha) \cap B(x,r) \subset C(x,\theta, \alpha') \cap B(x,r')
 \]
 if $\alpha' \ge \alpha$ and $r'\ge r$, there exist $r>0$ and $\alpha >0$ such that the subset
 \[
 \{x \in E_0\,:\,C(x,\theta_x, \alpha) \cap B(x,r) \cap E = \emptyset\}
 \]
 has positive $\mu$-measure. By considering a countable dense set of directions $\{\theta_i\}_{i \in \N}$, we have that there exists one fixed direction $\theta_i$ such that the set
 \[
 E_1 := \{x \in E_0\,:\,C(x,\theta_i, \alpha/2) \cap B(x,r) \cap E = \emptyset\}
 \]
 has positive $\mu$-measure. But now, for evey $x \in \R^n$, the set $E_1\cap B(x,r/2)$ is contained in a Lipschitz graph and hence $E_1$ is an $(n-1)$-rectifiable set, giving a contradiction with the pure $(n-1)$-unrectifiability of $\mu$.
\end{proof}

\subsection{Alexandrov spaces}

Let us recall some basics about Alexandrov spaces.
Unless we provide another source, all the following definitions and results can be found in \cite{BuragoBuragoIvanov}.

Alexandrov spaces generalize sectional curvature bounds by means of comparison to constant curvature model spaces. Alexandrov spaces can be defined for instance by comparing geodesic triangles of a metric space to the corresponding ones in a model space. Let us next give precise definitions. 

For each $k\in\R$, let $M_k$ be a simply connected surface with constant sectional curvature equal to $k$, that is, for negative $k$, $M_k$ is a scaled hyperbolic plane, for $k=0$, $M_k$ is the Euclidean plane, and for positive $k$, $M_k$ is a (round) sphere. Let us denote the distance between two points $x,y \in M_k$ by $|x-y|$.

Let $k\in\R$. For a triplet $x,y,z\in X$, let $\tilde x,\tilde y,\tilde z\in M_k$ be points so that the triangles $\Delta (x,y,z)$ and $\Delta (\tilde x,\tilde y,\tilde z)$ have the same side lengths, that is, $d(x,y)=|\tilde x-\tilde y|,d(y,z)=|\tilde y-\tilde z|,d(x,z)=|\tilde x-\tilde z|$. We call the triangle $\Delta (\tilde x,\tilde y,\tilde z)$  a comparison triangle for $\Delta (x,y,z)$.
For a triangle $\Delta(x,y,z)$ in $X$ we denote by $\tilde\measuredangle_k(y,x,z)$ the comparison angle at $\tilde x$ in the comparison triangle $\Delta(\tilde x,\tilde y,\tilde z)$ in $M_k$.

\begin{definition}[Alexandrov space] We say that $(X,d)$ is an Alexandrov space (with curvature bounded below by $k$) if there exists $k\in \R$ so that for each point $p\in X$ there exists a neighbourhood $U$ of $p$ for which the following holds. If $\Delta (x,y,z)\subset U$, $\Delta (\tilde x,\tilde y,\tilde z)$ its comparison triangle in $M_k$, and $w\in [x,y]$, $\tilde w\in [\tilde x,\tilde y]$ with $d(x,w)=|\tilde x-\tilde w|$, then $d(w,z)\ge |\tilde w-\tilde z|$. \end{definition}
 
An Alexandrov space might have infinite (Hausdorff) dimension. In this paper we study only finite dimensional Alexandrov spaces. Recall that in an Alexandrov space every open nonempty set has the same dimension, so the dimension of an Alexandrov space is always well defined.
Moreover, the dimension is either an integer or infinity.
From now on, the space $(X,d)$ is assumed to be an $n$-dimensional Alexandrov space with curvature bounded below by $k \in \R$ with $n \in \N$.


We will use the fact that our purely ($n-1$)-unrectifiable starting measures $\mu_0$ live on the regular set of the space, that has nice charts. Let us recall the notion of regular and singular points.

\begin{definition}
 A point $p \in X$ is called \emph{regular}, if the space of directions $\Sigma_p$ at $p$ is isometric to the standard sphere $\S^{n-1}$, or equivalently, if the Gromov-Hausdorff tangent at $p$ is the Euclidean $\R^n$.
 A point $p \in X$ that is not regular is called \emph{singular}. The set of regular points of $X$ is denoted by 
 $\Reg(X)$ and the set of singular points by $\Sing(X)$.
\end{definition}

The following  result is from \cite{OtsuShioya} (see also \cite{BuragoGromovPerelman}). It implies that our starting measures $\mu_0$ give zero measure to the singular set.

\begin{theorem}\label{thm:singsize}
 The set $\Sing(X)$ is $(n-1)$-rectifiable.
\end{theorem}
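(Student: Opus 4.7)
Plan: The proof proceeds via the Burago-Gromov-Perelman theory of strainers, stratifying the singular set by the failure of strainer conditions. For each $\delta > 0$, say a point $p \in X$ is \emph{$(n,\delta)$-strained} if there exist $n$ pairs of points $(a_i,b_i)_{i=1}^n$ near $p$ such that $\tilde\measuredangle_k(a_i,p,b_i) > \pi - \delta$ while $\tilde\measuredangle_k(a_i,p,a_j)$, $\tilde\measuredangle_k(a_i,p,b_j)$, $\tilde\measuredangle_k(b_i,p,b_j) > \tfrac{\pi}{2} - \delta$ for all $i \neq j$, and let $S_\delta$ denote the complement in $X$ of the set of $(n,\delta)$-strained points. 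At a regular point the space of directions is isometric to $\S^{n-1}$, which carries $n$ orthonormal pairs of antipodal directions; lifting these via Gromov--Hausdorff convergence to the tangent cone $\R^n$ yields an $(n,\delta)$-strainer at small scale for every $\delta > 0$. Consequently $\Sing(X) \subset \bigcup_{k\in\N} S_{1/k}$, and it suffices to show that each $S_\delta$ is $(n-1)$-rectifiable.

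To establish $(n-1)$-rectifiability of $S_\delta$, cover $X$ by countably many relatively compact open sets $U_j$, each equipped with a fixed $(n,\delta/2)$-strainer $(a_i^j, b_i^j)_{i=1}^n$ chosen from a nearby strained reference point (the set of strained points is open and, by BGP, has full dimension so is nonempty in every neighbourhood). The Burago--Gromov--Perelman theorem guarantees that the distance-coordinate map $\Phi_j := (d(\cdot,a_1^j),\ldots,d(\cdot,a_n^j))$ is bi-Lipschitz onto its image in $\R^n$, with bi-Lipschitz constant tending to $1$ as $\delta \to 0$. The heart of the Otsu--Shioya argument is the geometric observation that, within the chart $\Phi_j(U_j)$, the image $\Phi_j(S_\delta \cap U_j)$ is contained in a finite union of graphs of Lipschitz functions over coordinate hyperplanes: failure of the strainer condition in direction $i$ at a point $q \in U_j$ forces the excess $d(q,a_i^j) + d(q,b_i^j) - d(a_i^j,b_i^j)$ to be bounded below by a definite constant, and a monotonicity argument along geodesics issuing from $a_i^j$ shows that this locus is a Lipschitz hypersurface transverse to the $i$-th coordinate. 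Pulling back through $\Phi_j^{-1}$ realises $S_\delta \cap U_j$ as a finite union of Lipschitz images of subsets of $\R^{n-1}$. Summing over $j$ and then over $\delta = 1/k$ concludes the proof.

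The main technical obstacle is the Lipschitz graph structure of the non-strained set within a strained chart, i.e.\ upgrading the Hausdorff-dimension bound of Burago--Gromov--Perelman to actual rectifiability; the other ingredients (existence of BGP bi-Lipschitz charts, the regular-implies-strained statement, and the countable-union bookkeeping) are essentially standard once that core statement is in hand.
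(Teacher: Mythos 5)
Your approach is genuinely different from the paper's, but it contains a real gap at precisely the point you flag as ``the main technical obstacle.'' You assert that within a BGP chart $\Phi_j$, the image $\Phi_j(S_\delta\cap U_j)$ is contained in a finite union of Lipschitz graphs over coordinate hyperplanes, justified by a one-sentence sketch (``failure of the strainer condition\ldots forces the excess\ldots bounded below\ldots monotonicity\ldots Lipschitz hypersurface''). This is not an obvious consequence of anything in Burago--Gromov--Perelman: being non-$(n,\delta)$-strained at $q$ is a statement about the nonexistence of \emph{any} $(n,\delta)$-strainer near $q$, not a quantitative statement about the fixed chart strainer $(a_i^j,b_i^j)$ measured at $q$; the set where a fixed excess function exceeds a constant is a closed set with interior, not a hypersurface; and even granting a bound on the excess, the passage to a Lipschitz graph transverse to the $i$-th coordinate is exactly the rectifiability statement you set out to prove. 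The stratification $\Sing(X)\subset\bigcup_k S_{1/k}$, the existence of bi-Lipschitz strainer charts, and the countability bookkeeping are all fine; but the graph claim is the theorem, and it is asserted rather than proved.

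The paper avoids this entirely by re-reading the proof of Otsu--Shioya's Theorem~A, which is a radial argument: for countably many centers $p$, the singular set is covered by Lipschitz images of subsets of the spaces of directions $\Sigma_p$ (roughly, along each geodesic from $p$ the strainer-defect argument pins down the non-strained points Lipschitzly in the direction $\xi\in\Sigma_p$). Otsu--Shioya only drew the Hausdorff-dimension conclusion $\dim_H\Sigma_p\le n-1$, but the paper's observation is that the centers $p$, which only need to form a maximal $\varepsilon$-net, can be taken in $\Reg(X)$ (regular points are dense), whence $\Sigma_p\cong\S^{n-1}$ and each covering piece is a Lipschitz image of a subset of $\S^{n-1}$. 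That is a one-line upgrade of an existing proof, whereas your route requires establishing a new structural statement (Lipschitz graph decomposition of $S_\delta$ in charts) that, as far as I know, is not in BGP or Otsu--Shioya in the form you use it. If you can supply a genuine proof of that graph decomposition, your argument would go through and would in fact be a somewhat more self-contained alternative; as written, it does not close.
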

\begin{proof}
Notice that \cite[Theorem A]{OtsuShioya} states that $\Sing(X)$ has Hausdorff dimension at most $n-1$.
However, the proof easily gives the stronger conclusion of $(n-1)$-rectifiability. Namely, observe that in the proof of \cite[Theorem A]{OtsuShioya} Otsu and Shioya show that $\Sing(X)$ is contained in Lipschitz images from subsets of the spaces of directions $\Sigma_p$ for countably many points $p \in X$. Since the points $p$ are only needed to locally form a maximal $\varepsilon$-discrete net in $X$, they can be chosen to be regular points of $X$. Thus, $\Sing(X)$ is contained in countably many Lipschitz images from subsets of $\S^{n-1}$ and is therefore $(n-1)$-rectifiable.
\end{proof}

Let us then recall a well-known consequence of the nonbranching property of Alexandrov spaces. For its proof, we need the notion of an angle.
Let $\alpha, \beta \colon [0,1] \to X$ be two constant speed geodesics emanating from the same point $p = \alpha(0) = \beta(0)$. Let us denote by $\theta_k(t,s) := \tilde\measuredangle_k(\alpha(t),p,\beta(s))$ the angle at $\tilde p$ of the comparison triangle $\Delta (\tilde p, \tilde\alpha(t),\tilde\beta(s))$ in $M_k$ of $\Delta (p, \alpha(t),\beta(s))$.
In Alexandrov spaces the angle 
\begin{equation}
\measuredangle(\alpha,\beta) := \lim_{t,s \searrow 0}\theta_k(t,s)
\end{equation}
is well-defined for every pair of geodesics $\alpha, \beta$ emanating from the same point.
Moreover, by Alexandrov convexity (see for instance \cite[Section 2.2]{Shiohama}) the quantity $\theta_k(t,s)$ is  monotone non-increasing in both variables $t$ and $s$.

\begin{lemma}\label{lma:positive_distance_limit}
 Let $\gamma_1, \gamma_2 \colon [0,1] \to X$ be be two constant speed geodesics with $\gamma_1(0)=\gamma_2(0)$ and $\gamma_1(1) \ne \gamma_2(1)$. Then
 \[
 \lim_{t \searrow 0} \frac{d(\gamma_1(t),\gamma_2(t))}{t} > 0.
 \]
\end{lemma}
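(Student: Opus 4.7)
\emph{Plan.} Set $p = \gamma_1(0) = \gamma_2(0)$ and let $L_i := d(p, \gamma_i(1))$ denote the (constant) speed of $\gamma_i$, so that $d(p, \gamma_i(t)) = tL_i$. I would split on whether the two speeds agree. If $L_1 \neq L_2$, the reverse triangle inequality already gives
\[
d(\gamma_1(t), \gamma_2(t)) \geq \bigl|d(p, \gamma_1(t)) - d(p, \gamma_2(t))\bigr| = t|L_1 - L_2|,
\]
and hence $\lim_{t \searrow 0} d(\gamma_1(t), \gamma_2(t))/t \geq |L_1 - L_2| > 0$.

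The substantive case is $L_1 = L_2 =: L$, with $L > 0$ (otherwise both geodesics would be constant, contradicting $\gamma_1(1) \neq \gamma_2(1)$). I plan to bound $\measuredangle(\gamma_1, \gamma_2)$ below by a strictly positive comparison angle. Pick $s_0 \in (0, 1]$ small enough that (i) the triangle $\Delta(p, \gamma_1(s_0), \gamma_2(s_0))$ admits a comparison triangle in $M_k$ (automatic for $k \leq 0$, ensured by smallness of the perimeter for $k > 0$), and (ii) $\gamma_1(s_0) \neq \gamma_2(s_0)$. For (ii), should $\gamma_1(s) = \gamma_2(s)$ hold for some small $s > 0$, the local uniqueness of short geodesics in Alexandrov spaces would force $\gamma_1|_{[0,s]} = \gamma_2|_{[0,s]}$, and then the non-branching of Alexandrov spaces would force $\gamma_1 \equiv \gamma_2$ on $[0,1]$, contradicting $\gamma_1(1) \neq \gamma_2(1)$. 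With such $s_0$ fixed, the comparison triangle $\Delta(\tilde p, \tilde\gamma_1(s_0), \tilde\gamma_2(s_0)) \subset M_k$ is a non-degenerate isoceles triangle (two sides of length $s_0 L$, third side positive), so its angle $\theta^* := \theta_k(s_0, s_0)$ at $\tilde p$ is strictly positive. The monotonicity of $\theta_k$ recalled just above the lemma then yields $\theta_k(t, t) \geq \theta^*$ for every $t \in (0, s_0]$, and in particular $\measuredangle(\gamma_1, \gamma_2) \geq \theta^* > 0$.

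To conclude, I would apply the $M_k$ law of cosines to the comparison triangle of $\Delta(p, \gamma_1(t), \gamma_2(t))$, whose two long sides have length $tL$ and whose angle at $\tilde p$ is $\theta_k(t, t)$. A Taylor expansion as $t \searrow 0$ (valid uniformly because all three sides shrink) yields
\[
\lim_{t \searrow 0}\frac{d(\gamma_1(t), \gamma_2(t))}{t} = L\sqrt{2\bigl(1 - \cos\measuredangle(\gamma_1, \gamma_2)\bigr)} \geq L\sqrt{2(1 - \cos\theta^*)} > 0.
\]
The main obstacle in this plan is step (ii), i.e.\ ensuring that there is a small positive scale at which the two geodesics have already separated, so that the relevant comparison triangle is truly non-degenerate; once this is in place, the argument reduces to the monotonicity of comparison angles and a routine expansion of the $M_k$ law of cosines.
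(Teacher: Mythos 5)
Your overall strategy is the same as the paper's: split on whether the speeds agree (reverse triangle inequality in one case), and in the equal-speed case bound the angle $\measuredangle(\gamma_1,\gamma_2)$ below by a positive comparison angle via monotonicity, then finish with the law of cosines. Where you differ is that the paper applies the comparison at the full scale $t=s=1$, taking $\theta_k(1,1)=\tilde\measuredangle_k(\gamma_1(1),p,\gamma_2(1))>0$ directly from the hypothesis $\gamma_1(1)\ne\gamma_2(1)$ (for $k>0$ this is licensed by the standard perimeter bound $\le 2\pi/\sqrt{k}$ for triangles in a space with $\operatorname{curv}\ge k$, together with Toponogov globalization, so no shrinking is needed). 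By instead shrinking to a small scale $s_0$, you create for yourself the new worry that $\gamma_1(s_0)=\gamma_2(s_0)$, and your justification for ruling this out appeals to \emph{local uniqueness of short geodesics}, which is not a valid general fact in Alexandrov spaces: geodesics between nearby points need not be unique, so this step, as written, is a genuine gap.

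That said, your argument is salvageable with nonbranching alone and no appeal to local uniqueness: if $\gamma_1(s)=\gamma_2(s)$ for some $s\in(0,1)$ and $L_1=L_2$, then the concatenation $\gamma_1|_{[0,s]}*\gamma_2|_{[s,1]}$ has length $L$ and is therefore a geodesic from $p$ to $\gamma_2(1)$ coinciding with $\gamma_2$ on $[s,1]$; running nonbranching backwards in time forces $\gamma_1|_{[0,s]}=\gamma_2|_{[0,s]}$, and then forwards forces $\gamma_1\equiv\gamma_2$, a contradiction. The cleanest fix, though, is simply to do what the paper does and take $s_0=1$, where nondegeneracy is the hypothesis and the whole issue disappears.
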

\begin{proof}
We may assume $\ell(\gamma_1) \ge \ell(\gamma_2)$. If $\ell(\gamma_1) > \ell(\gamma_2)$, then by triangle inequality
$d(\gamma_1(t),\gamma_2(t)) \ge t (\ell(\gamma_1) - \ell(\gamma_2))$, giving the claim. If $\ell(\gamma_1) = \ell(\gamma_2)$, then $\theta_k(1,1) = \tilde\measuredangle_k(\gamma_1(1),x,\gamma_2(1)) > 0$.
Then by Alexandrov convexity, $\measuredangle(\gamma_1,\gamma_2) \ge \theta_k(1,1) > 0$, 
and thus by the cosine law
\[
\frac{d(\gamma_1(t),\gamma_2(t))}{t} \to \ell(\gamma_1)\sqrt{2-2\cos(\measuredangle(\gamma_1,\gamma_2))} > 0,
\]
as $t \to 0$.
\end{proof}


Our aim is to arrive at a contradiction with cyclical monotonicity at a small scale near a regular point. We will transfer the Euclidean argument to the Alexandrov space $X$ using the following standard charts $\varphi$. Since we need the existence of directions of geodesics in these charts, we write the existence down explicitly inside the following theorem.

\begin{theorem}\label{thm:epsilonneighborhood}
 For every $p \in \Reg(X)$ and every $\varepsilon > 0$ there exist a neighborhood $U$ of $p$ and a $(1+\varepsilon)$-biLipschitz map
 $\varphi \colon U \to \R^n$ with $\varphi(U)$ open so that for every constant speed geodesic $\gamma \colon [0,1] \to U$ the limit
 \[
  \lim_{t \searrow 0}\frac{\varphi(\gamma(t))-\varphi(\gamma(0))}{d(\gamma(t),\gamma(0))}
 \]
 exists.
\end{theorem}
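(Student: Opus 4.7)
I would prove this by combining two classical facts about Alexandrov spaces: the existence of strainer charts at regular points, and the first variation formula for the distance function.

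First, since $p \in \Reg(X)$ the Gromov-Hausdorff tangent at $p$ is $\R^n$, so for every $\delta>0$ the point $p$ admits an $(n,\delta)$-strainer $\{(a_i,b_i)\}_{i=1}^n$ in the sense of Burago-Gromov-Perelman \cite{BuragoGromovPerelman}: pairs of points close to $p$ for which the comparison angles $\tilde\measuredangle_k(a_i,p,b_i)$ are close to $\pi$, while $\tilde\measuredangle_k(a_i,p,a_j)$, $\tilde\measuredangle_k(a_i,p,b_j)$ and $\tilde\measuredangle_k(b_i,p,b_j)$ for $i\ne j$ are close to $\pi/2$. Given $\varepsilon>0$ I would pick $\delta$ sufficiently small and take $U$ to be a sufficiently small neighborhood of $p$ with $\overline{U}\cap\{a_1,\dots,a_n\}=\emptyset$, and define the strainer map $\varphi(x):=(d(x,a_1),\dots,d(x,a_n))$. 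The standard strainer estimates from \cite{BuragoGromovPerelman} then yield that $\varphi$ is $(1+\varepsilon)$-biLipschitz on $U$; openness of $\varphi(U)$ follows from invariance of domain applied to this topological embedding into $\R^n$.

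Second, because $\gamma$ has constant speed we have $d(\gamma(t),\gamma(0))=t\,\ell(\gamma)$, so (for nonconstant $\gamma$) each coordinate of the claimed limit reduces to
\[
\lim_{t\searrow 0}\frac{d(a_i,\gamma(t))-d(a_i,\gamma(0))}{t\,\ell(\gamma)}.
\]
The existence of this limit is the content of the first variation formula for the distance function in Alexandrov spaces, due to Otsu-Shioya \cite{OtsuShioya}, which moreover identifies it as $-\cos\theta_i$ where $\theta_i$ is the infimum of $\measuredangle(\gamma,\sigma)$ over geodesics $\sigma$ from $\gamma(0)$ to $a_i$. Since each $a_i$ stays at a positive distance from $\overline{U}$, the relevant geodesics from $\gamma(0)\in U$ to $a_i$ have lengths bounded away from zero and the angle $\theta_i$ is well-defined.

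The main obstacle here is not conceptual but quantitative: one has to pick $\delta$ small enough that an $(n,\delta)$-strainer produces a biLipschitz constant within $1+\varepsilon$, and then $U$ small enough that this estimate holds uniformly on $U$ rather than merely infinitesimally at $p$. Both ingredients -- the strainer chart and the first variation formula -- are part of the standard toolbox of Alexandrov geometry from \cite{BuragoGromovPerelman,OtsuShioya}, so once these parameters are fixed the theorem follows directly.
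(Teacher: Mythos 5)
Your proposal is correct and follows essentially the same route as the paper: take the strainer chart $\varphi(x)=(d(a_1,x),\dots,d(a_n,x))$ from Burago--Gromov--Perelman, which is $(1+\varepsilon)$-biLipschitz near the regular point $p$, and then apply the Otsu--Shioya first variation formula to obtain each coordinate limit $-\cos\theta_i$. The only cosmetic difference is that you spell out the infimum over geodesics from $\gamma(0)$ to $a_i$ in the first variation formula, while the paper simply fixes one such geodesic; the argument is otherwise identical.
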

\begin{proof}
We recall (see \cite{OtsuShioya} or \cite[Theorem 10.8.4]{BuragoBuragoIvanov}) that the local $(1+\varepsilon)$-biLipschitz chart $\varphi \colon U \to \R^n$ can be obtained as
\[
 \varphi(x) = (d(a_1,x),d(a_2,x),\dots,d(a_n,x)),
\]
where $(a_i,b_i)_{i=1}^n$ is a $\delta$-strainer for $p$, for some $\delta > 0$.
Now, the first variation formula (see \cite[Theorem 3.5]{OtsuShioya} or \cite[Theorem 4.5.6, Corollary 4.5.7]{BuragoBuragoIvanov}) implies that 
\[
 \lim_{t \searrow 0} \frac{d(a_i,\gamma(t))-d(a_i,\gamma(0))}{d(\gamma(t),\gamma(0))} = -\cos(\alpha),
\]
where $\alpha = \measuredangle(\gamma,\beta)$, with $\beta$ a geodesic from $\gamma(0)$ to $a_i$. Thus, the required limit exists for each $i$.
\end{proof}


\subsection{Optimal mass transportation}

In this section we recall a few basic things in optimal mass transportation.

The Monge-Kantorovich formulation of optimal mass transportation problem (with quadratic cost) is  to investigate for two Borel probability measures $\mu_0$ and $\mu_1$ the following infimum
\begin{equation}\inf \int_{X\times X} d^2(x,y)\,\d\pi(x,y),\end{equation}
where the infimum is taken over all Borel probability measures $\pi\in \mathcal P(X\times X)$ which has $\mu_0$ and $\mu_1$ as a marginals, that is, $\pi(A\times X)=\mu_0(A)$ and $\pi(X\times A)=\mu_1(A)$ for all Borel sets $A\in \B(X)$. In order to guarantee that the above infimum is finite, it is standard to assume the measures $\mu_0$ and $\mu_1$ to have finite second moments. The set of all Borel probability measures in $X$ with finite second moments is denoted by $\mathcal P_2(X)$.

An admissible measure that minimizes the above infimum is called an optimal (transport) plan, and the set of optimal plans between $\mu_0$ and $\mu_1$ is denoted by $\opt(\mu_0,\mu_1)$. We say that an optimal plan $\pi$ is induced by a map, if there exists a Borel measurable function $T\colon X\to X$ so that $\pi =(\id\times T)_\# \mu_0$. Such a map is called an optimal (transport) map. While optimal plans exist under fairly general assumptions \cite{Villani}, the existence of optimal maps is not true in general.

Optimality of a given transport plan depends only on the $c$-cyclical monotonicity of the support of the plan. 
Let us recall this notion.
\begin{definition}[cyclical monotonicity]A set $\Gamma\subset X\times X$ is called c-cyclically monotone, if for all finite sets of points $\{(x_i,y_i)\}_{i=1}^N\subset \Gamma$ the inequality
\begin{equation}\label{cyclicalmonotonicity}\sum_{i=1}^Nd^2(x_i,y_i)\le \sum_{i=1}^Nd^2(x_{\sigma(i)},y_{i})\end{equation}
holds for all permutations $\sigma\in S_N$ of $\{1,\dots,N\}$.
\end{definition}
%
A characterization of optimality using $c$-cyclical monotonicity of the support that is sufficient for us is  the following result proven in \cite{Pratelli} which holds for continuous cost functions.
\begin{lemma}[{\cite[Theorem B]{Pratelli}}]\label{lma:cyclical}
Let $X$ be a Polish space and $\mu_0, \mu_1\in\mathcal{P}_2(X)$. Then a transport plan $\pi$ between $\mu_0$ and $\mu_1$ is optimal if and only if its support is c-cyclically monotone set.
\end{lemma}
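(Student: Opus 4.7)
The plan is to prove the two implications separately: necessity by a local perturbation of $\pi$, and sufficiency by Kantorovich duality built from a Rockafellar--R\"uschendorf potential.

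\textbf{Necessity.} Assuming $\pi$ is optimal but $\spt(\pi)$ is not $c$-cyclically monotone, I would extract a violating configuration $(x_i,y_i)_{i=1}^N\subset\spt(\pi)$ and $\sigma\in S_N$ with
\[
\sum_{i=1}^N d^2(x_{\sigma(i)},y_i) + 2\delta \; < \; \sum_{i=1}^N d^2(x_i,y_i)
\]
for some $\delta>0$. Using continuity of $d^2$, I would shrink neighborhoods $A_i\ni x_i$, $B_i\ni y_i$ so that the same inequality with margin $\delta$ persists for every choice of points in $A_i\times B_i$; by definition of support, $\pi(A_i\times B_i)>0$. Picking $0<\varepsilon\le\min_i\pi(A_i\times B_i)$ and rescaling $\pi_i:=(\varepsilon/\pi(A_i\times B_i))\pi\lfloor_{A_i\times B_i}$, I disintegrate each $\pi_i$ in its two marginals and build a rearranged plan $\pi'$ by rerouting the $x$-marginal of $\pi_i$ against the $y$-marginal of $\pi_{\sigma^{-1}(i)}$. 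Then $\pi'$ has the same marginals as $\pi$ but cost at most $\int d^2\,d\pi-\delta\varepsilon$, contradicting optimality.

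\textbf{Sufficiency.} Assume $\spt(\pi)$ is $c$-cyclically monotone. Fix $(x_0,y_0)\in\spt(\pi)$ and define the Rockafellar potential
\[
\phi(x):=\inf\!\left\{\sum_{i=1}^N\bigl[d^2(x_i,y_{i-1})-d^2(x_{i-1},y_{i-1})\bigr]+d^2(x,y_N)-d^2(x_N,y_N)\right\},
\]
the infimum running over all $N\in\N$ and $(x_i,y_i)_{i=1}^N\subset\spt(\pi)$. Cyclical monotonicity gives $\phi(x_0)\ge 0$, and by construction $\phi$ is $c$-concave; its $c$-transform $\phi^c(y):=\inf_{x}[d^2(x,y)-\phi(x)]$ satisfies $\phi(x)+\phi^c(y)\le d^2(x,y)$ everywhere, with equality on $\spt(\pi)$. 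Since $\mu_0,\mu_1\in\mathcal P_2(X)$, the quadratic cost is $(\mu_0\otimes\mu_1)$-integrable, hence integrable against every admissible plan. The one-sided bounds $\phi(x)\le d^2(x,y_0)-\phi^c(y_0)$ and $\phi^c(y)\le d^2(x_0,y)-\phi(x_0)$ then make the integrals below well-defined, and for any competitor $\pi'$ with marginals $\mu_0,\mu_1$,
\[
\int d^2\,d\pi' \;\ge\; \int\bigl(\phi(x)+\phi^c(y)\bigr)d\pi'\;=\;\int\phi\,d\mu_0+\int\phi^c\,d\mu_1\;=\;\int d^2\,d\pi,
\]
so $\pi$ is optimal.

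\textbf{Main obstacle.} The delicate point is ensuring that the displayed chain of integral comparisons is actually legitimate when $\phi$ or $\phi^c$ may take the value $-\infty$ or fail to be globally $\mu_0$- or $\mu_1$-integrable. Within the $\mathcal P_2$ framework the quadratic cost is dominated by a sum of squared distances to a base point, which is exactly what is needed to control the potentials from one side and justify splitting the integral of $\phi(x)+\phi^c(y)$; under only continuity of the cost, as in Pratelli's setting, this step demands a more careful truncation/limiting argument, which is precisely the technical content of \cite{Pratelli}.
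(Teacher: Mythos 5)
The paper does not prove this lemma; it is quoted directly from Pratelli \cite{Pratelli}, so there is no in-paper argument to compare against. Your sketch is a correct outline of the classical route. The necessity direction (optimality implies $c$-cyclical monotonicity of the support) via localisation to neighbourhoods where the cost defect persists and rerouting a fixed small amount of mass is the standard perturbation argument and is fine. For sufficiency you invoke the Rockafellar--R\"uschendorf potential together with a duality inequality; this works in the $\mathcal P_2$ framework, and you correctly single out the integrability of $\phi$ and $\phi^c$ as the nontrivial point. To actually close that point: the one-sided bounds you state give $\phi^+\in L^1(\mu_0)$ and $(\phi^c)^+\in L^1(\mu_1)$; then the $\pi$-a.e.\ identity $\phi(x)+\phi^c(y)=d^2(x,y)\ge0$ (valid on $\spt(\pi)$) lets one dominate $\phi^-$ and $(\phi^c)^-$ by the integrable positive parts of the other potential, so both $\phi\in L^1(\mu_0)$ and $\phi^c\in L^1(\mu_1)$ and the chain of integral comparisons is legitimate. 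One should also note explicitly that $\phi$ is not identically $-\infty$, which follows from $\phi(x_0)=0$. Where your route genuinely diverges from Pratelli's is generality: Pratelli's theorem treats arbitrary continuous, possibly $[0,+\infty]$-valued costs, for which the potential construction can fail (non-measurability, non-integrability, $\phi\equiv-\infty$), and his proof instead proceeds by a more involved discretisation and approximation scheme. In the $\mathcal P_2$, finite quadratic-cost setting of the present paper your simpler potential-based argument suffices, so the difference is one of scope rather than correctness.
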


In the following lemma we recall a well-known fact which allows us to localize the problem. One way to prove this is to use the result of Lisini in \cite{Lisini} about Wasserstein geodesics and their lifts to the space of probability measures on geodesics of $X$, see \cite{Galaz-GarciaKellMondinoSosa} for the proof.

\begin{lemma}\label{lma:localcyclical}Let $(X,d)$ be a complete and separable geodesic metric space, and let $\Gamma\subset X\times X$ be a c-cyclically monotone set. Then, the set 
\[
\Gamma_t\coloneqq \{(\gamma(0),\gamma(t))\in X\times X: \gamma\in\geo(X) \mathrm{\ with \ }(\gamma(0),\gamma(1))\in \Gamma\}
\] 
is $c$-cyclically monotone for all $t\in[0,1]$.
\end{lemma}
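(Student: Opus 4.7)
I would argue by contradiction, exploiting the fact that for finitely supported measures, the displacement interpolation along geodesics whose endpoints lie in a $c$-cyclically monotone set realizes the $W_2$ distance at every intermediate time. Suppose $\Gamma_t$ fails to be $c$-cyclically monotone: there exist points $(x_i,y_i)\in\Gamma_t$, $i=1,\dots,N$, and a permutation $\sigma\in S_N$ with
\[
\sum_{i=1}^N d^2(x_{\sigma(i)},y_i)<\sum_{i=1}^N d^2(x_i,y_i).
\]
By definition of $\Gamma_t$, each pair comes from a geodesic $\gamma_i\in\geo(X)$ with $\gamma_i(0)=x_i$, $\gamma_i(t)=y_i$ and $(x_i,z_i)\in\Gamma$, where $z_i:=\gamma_i(1)$.

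Introduce the empirical measures
\[
\mu_0:=\tfrac{1}{N}\sum_{i=1}^N\delta_{x_i},\qquad \mu_t:=\tfrac{1}{N}\sum_{i=1}^N\delta_{y_i},\qquad \mu_1:=\tfrac{1}{N}\sum_{i=1}^N\delta_{z_i},
\]
all of which lie in $\mathcal{P}_2(X)$. The plan $\tfrac{1}{N}\sum_i\delta_{(x_i,z_i)}$ is concentrated on the $c$-cyclically monotone set $\Gamma$, so by Lemma \ref{lma:cyclical} it is optimal between $\mu_0$ and $\mu_1$; hence $W_2^2(\mu_0,\mu_1)=\tfrac{1}{N}\sum_i d^2(x_i,z_i)$. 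Using the geodesic identities $d(x_i,y_i)=t\,d(x_i,z_i)$ and $d(y_i,z_i)=(1-t)\,d(x_i,z_i)$, the admissible plans $\tfrac{1}{N}\sum_i\delta_{(x_i,y_i)}$ and $\tfrac{1}{N}\sum_i\delta_{(y_i,z_i)}$ carry costs $t^2W_2^2(\mu_0,\mu_1)$ and $(1-t)^2W_2^2(\mu_0,\mu_1)$ respectively, yielding $W_2(\mu_0,\mu_t)\le t\,W_2(\mu_0,\mu_1)$ and $W_2(\mu_t,\mu_1)\le (1-t)\,W_2(\mu_0,\mu_1)$.

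The triangle inequality for $W_2$ then forces both of these upper bounds to be equalities; in particular $W_2^2(\mu_0,\mu_t)=t^2W_2^2(\mu_0,\mu_1)=\tfrac{1}{N}\sum_i d^2(x_i,y_i)$. On the other hand, reindexing shows that $\tfrac{1}{N}\sum_i\delta_{(x_{\sigma(i)},y_i)}$ is again an admissible plan between $\mu_0$ and $\mu_t$, and its cost equals $\tfrac{1}{N}\sum_i d^2(x_{\sigma(i)},y_i)$, which by assumption is strictly smaller than $W_2^2(\mu_0,\mu_t)$. This contradicts the definition of $W_2$, proving the lemma.

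The argument is essentially routine; I expect no substantial obstacle. The only points requiring a little care are to verify that the triangle inequality really pins $\mu_t$ onto a Wasserstein geodesic joining $\mu_0$ to $\mu_1$ (so that the candidate optimal plan between $\mu_0$ and $\mu_t$ has cost exactly $t^2 W_2^2(\mu_0,\mu_1)$), and to observe that possible coincidences among the $x_i$, $y_i$, or $z_i$ cause no trouble because the empirical measures carry mass $1/N$ per pair and the permutation $\sigma$ preserves the first marginal. This route sidesteps Lisini's disintegration theorem and relies only on Lemma \ref{lma:cyclical}.
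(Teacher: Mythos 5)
Your proof is correct, and it takes a genuinely different route from the one the paper points to. The paper does not prove the lemma itself; it refers the reader to the argument in \cite{Galaz-GarciaKellMondinoSosa}, which relies on Lisini's result about lifting Wasserstein geodesics to probability measures on $\geo(X)$ and restricting at the intermediate time. You instead reduce to empirical measures on the finitely many violating pairs, apply Lemma~\ref{lma:cyclical} to see that the plan $\tfrac1N\sum_i\delta_{(x_i,z_i)}$ (whose support sits inside $\Gamma$) is optimal between $\mu_0$ and $\mu_1$, use the constant-speed parametrization to price the interpolating plans at $t^2W_2^2(\mu_0,\mu_1)$ and $(1-t)^2W_2^2(\mu_0,\mu_1)$, and then squeeze with the $W_2$ triangle inequality to force $W_2^2(\mu_0,\mu_t)=\tfrac1N\sum_i d^2(x_i,y_i)$; the permuted plan then beats the Wasserstein distance, a contradiction. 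This is more elementary and self-contained: it avoids Lisini's disintegration machinery entirely, using only Pratelli's characterization of optimality (already invoked as Lemma~\ref{lma:cyclical}) and the metric triangle inequality, at the cost of being slightly less conceptual (it does not exhibit $\mu_t$ as a midpoint of a lifted geodesic plan). One minor stylistic remark: the second half of your ``points requiring care'' is really a non-issue, since the permuted plan $\tfrac1N\sum_i\delta_{(x_{\sigma(i)},y_i)}$ automatically has the correct marginals $\mu_0$ and $\mu_t$ regardless of coincidences among the points, so that observation can be dropped.
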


In order to arrive at a contradiction with cyclical monotonicity, we will use the following lemma.

\begin{lemma}\label{lma:cyclicalcontradicition}
For each $C>1$ there exists $\delta>0$ so that 
\begin{equation}
\frac12|y_1+y_2|^2<(1-\delta)(|y_1|^2+|y_2|^2)
\end{equation}
for all 
\[
y_1,y_2\in K\coloneqq \left\{(y_1,y_2)\in\R^{2n}\,:\,|y_2|=1\textrm{ and } |y_2-y_1|\in\left[\frac1C,C\right]\right\}.
\]
\end{lemma}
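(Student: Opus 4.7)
\smallskip

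The plan is to reduce the inequality to a lower bound on $|y_1-y_2|^2 / (|y_1|^2+|y_2|^2)$ via the parallelogram law, and then to bound both quantities explicitly on $K$.

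First I would apply the parallelogram identity
\begin{equation}
|y_1+y_2|^2 + |y_1-y_2|^2 = 2|y_1|^2 + 2|y_2|^2
\end{equation}
to rewrite $\tfrac12|y_1+y_2|^2 = |y_1|^2+|y_2|^2 - \tfrac12|y_1-y_2|^2$. Substituting this into the claimed inequality and rearranging, the statement becomes equivalent to
\begin{equation}
\delta \bigl(|y_1|^2 + |y_2|^2\bigr) < \tfrac12 |y_1 - y_2|^2,
\end{equation}
so it suffices to show that the ratio $|y_1-y_2|^2 / \bigl(2(|y_1|^2+|y_2|^2)\bigr)$ is bounded below by a positive constant on $K$.

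Next I would bound the two quantities separately using the two defining constraints of $K$. The constraint $|y_2-y_1|\in[1/C,C]$ gives the lower bound $|y_1-y_2|^2 \ge 1/C^2$. For the denominator, from $|y_2|=1$ and $|y_1-y_2|\le C$ the triangle inequality yields $|y_1|\le 1+C$, hence $|y_1|^2+|y_2|^2 \le (1+C)^2 + 1$. Combining these,
\begin{equation}
\frac{|y_1-y_2|^2}{2(|y_1|^2+|y_2|^2)} \;\ge\; \frac{1}{2C^2\bigl((1+C)^2+1\bigr)},
\end{equation}
so any $\delta\in\bigl(0,\tfrac{1}{2C^2((1+C)^2+1)}\bigr)$ gives a strict inequality uniformly on $K$, as required.

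I do not expect any serious obstacle here; the result is an elementary compactness-type estimate and the only care needed is to keep the inequality strict, which is why one chooses $\delta$ strictly smaller than the computed bound rather than equal to it.
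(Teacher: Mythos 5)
Your proof is correct, and it takes a genuinely different (and more explicit) route than the paper's. The paper's argument is a soft compactness argument: it first observes the non-quantitative inequality $|y_1+y_2|^2 < 2(|y_1|^2+|y_2|^2)$ whenever $y_1 \ne y_2$ (which is of course the parallelogram law in disguise), then notes $K$ is compact, the ratio $|y_1+y_2|^2/(|y_1|^2+|y_2|^2)$ is continuous on $K$, so its supremum is attained and hence strictly below $2$. You instead make the parallelogram law the explicit driver, reduce the claim to bounding $|y_1-y_2|^2/(2(|y_1|^2+|y_2|^2))$ below, and then bound numerator and denominator separately using the two constraints defining $K$, obtaining the concrete value $\delta < 1/(2C^2((1+C)^2+1))$. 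The trade-off is that the paper's version is shorter to state but non-constructive, whereas yours gives an explicit, quantitative $\delta$ and never invokes compactness or continuity; either is perfectly adequate here, but in a setting where one later wants to track constants (as the paper does in Step 4 with $\delta = \delta(2k)$), your explicit bound would be the more useful form.

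One tiny stylistic point: when you conclude, it helps to spell out the chain $\delta(|y_1|^2+|y_2|^2) \le \delta((1+C)^2+1) < \tfrac{1}{2C^2} \le \tfrac12|y_1-y_2|^2$ so the reader sees exactly where the strictness enters; your remark about choosing $\delta$ strictly below the bound is the right instinct, but writing the chain makes it airtight.
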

\begin{proof}
Let us first observe that for $y_1,y_2\in\R^n$, with $y_1 \ne y_2$ we have
\[
0 < |y_1-y_2|^2 = |y_1|^2 - 2\langle y_1,y_2 \rangle + |y_2|^2
\]
and thus
\begin{equation}\label{eq:nonquantitative}
|y_1+y_2|^2 = |y_1|^2 + 2\langle y_1,y_2 \rangle + |y_2|^2 < 2(|y_1|^2+|y_2|^2).
\end{equation}
The quantitative claim then follows by compactness of $K$: first of all notice that $K\subset \bar{B}(0,2+C)$ and thus $K$ is bounded. The set $K$ is also closed and hence it is compact. The function \[(y_1,y_2)\mapsto \frac{|y_1+y_2|^2}{|y_1|^2+|y_2|^2}\]
is continuous as a function $K\to \R$. Therefore, the maximum of the above function is achieved in $K$. By \eqref{eq:nonquantitative}, this maximum is strictly less than two and hence there exists $\delta>0$ as in the claim.  
\end{proof}



\section{Proof of Theorem \ref{thm:main}}\label{sec:proof}

In order to prove the uniqueness of optimal transport plans it suffices to show that any optimal transport plan is induced by a map. Indeed, if there were two different optimal plans $\pi_1$ and $\pi_2$, then their convex combination $\frac12(\pi_1+\pi_2)$ would also be optimal and not given by a map.
We will prove Theorem \ref{thm:main} by assuming that there exists an optimal plan that is not induced by a map, then localizing to a chart and using an Euclidean argument to find a contradiction.

\noindent
\textbf{Step 1: initial uniform bounds and measurable selections}\\
Let $\mu_0, \mu_1 \in \mathcal P_2(X)$ with $\mu_0$ purely $(n-1)$-unrectifiable. Let $\pi$ be an optimal  plan from $\mu_0$ to $\mu_1$. Towards a contradiction, we assume that $\pi$ is not induced by a map, that is, there does not exist a Borel map $T \colon X \to X$ so that $\pi = ({\rm id},T)_\sharp\mu_0$. 
Consider the set 
\[A\coloneqq\{x\in X:\text{ there exist } y^1,y^2\in X \textrm{\ such that\ } (x,y^1),(x,y^2)\in\spt(\pi)\}.  \]
Since $A$ is a projection of a Borel set 
\[\{(x,y,z,w)\in \spt(\pi)\times\spt(\pi): d(x,z)=0,\ d(y,w)>0\},\]
it is a Souslin set and thus $\mu_0$-measurable. (Actually, as a projection of a $\sigma$-compact set, $A$ is Borel.) We will show that $A$ has positive $\mu_0$ measure.

For that we will first show that there exists a Borel selection $T\colon \mathtt{p}_1(\spt(\pi)) \to X $ of $\spt(\pi)$, where $\mathtt{p}_1\colon X\times X \to X$ is the projection to the first coordinate. Define
\[(\spt(\pi))_x\coloneqq\{y\in X: (x,y)\in\spt(\pi)\}.\]
Then $(\spt(\pi))_x=(\{x\}\times X)\cap\spt(\pi)$ and thus it is closed. Furthermore, as a proper space, $X$ is also $\sigma$-compact, and thus so is $(\spt(\pi))_x$. Hence, by the Arsenin-Kunugui Theorem \cite[Theorem 35.46]{Kechris} there exists a Borel selection of $\spt(\pi)$, in other words, there exists a Borel map $T\colon \mathtt{p}_1(\spt(\pi))\to X$ with $\mathtt{p}_1(\spt(\pi))$ Borel so that $T(x)\in (\spt(\pi))_x$ for all $x\in \mathtt{p}_1(\spt(\pi))$. 

Suppose now that $\mu_0(A)=0$. We will show that in this case $\pi$ would be induced by the map $T$. Indeed, for $E\subset X\times X$ we have that
\begin{align}(\id,T)_\#\mu_0(E)&=\mu_0((\id,T)^{-1}(E)\setminus A)=\mu_0(\mathtt{p}_1(E\cap \mathrm{Graph}(T)))\\&=\pi((\mathtt{p}_1(E\cap\mathrm{Graph}(T))\times X)\cap\spt(\pi))
\\&=\pi((E\cap\spt(\pi))\setminus(A\times X))=\pi(E\cap\spt(\pi))=\pi(E).
\end{align}
Thus $\mu_0(A)>0$.

Since $X$ is geodesic, for all $x\in A$ there exist $\gamma^1_x,\gamma^2_x\in\geo(X)$ such that $\gamma^1_x(0)=x=\gamma^2_x(0)$, $\gamma^1_x(1)\neq \gamma^2_x(1)$, and $(\gamma^i_x(0),\gamma^i_x(1))\in\spt (\pi)$ for $i\in\{1,2\}$. We will need to choose the geodesics $\gamma_x^1$ and $\gamma_x^2$ in a measurable way. We will also make the selection so that
\begin{align}\label{extracond}
d(x,\gamma_x^1(1)) \le d(x,\gamma_x^2(1)) \ne 0.
\end{align}
By now, we have a Borel selection $T$ of $\spt(\pi)$. Since $\mathtt{p}_1(\spt(\pi))$ is a Borel set, we can extend $T$ to a Borel map $T\colon X\to X$. Consider now the set $\spt(\pi)\setminus \mathrm{Graph(T)}$. Since $T$ is a Borel map, the graph of $T$ is a Borel set and thus the set $\spt(\pi)\setminus \mathrm{Graph(T)}$ is a Borel set. Since $X\setminus T(x)$ is $\sigma$-compact by the properness and separability of $X$, we have that $(\spt(\pi)\setminus \mathrm{Graph(T)})_x$ is $\sigma$-compact as a closed subset of $X\setminus T(x)$. Thus again by the Arsenin-Kunugui Theorem 
there exists a Borel selection $S\colon \mathtt{p}_1(\spt(\pi)\setminus \mathrm{Graph(T)})\to X$ that we can further extend to a Borel map $S\colon X\to X$ for which we have that $T(x)=S(x)$ for $x\notin A$, and $T(x)\neq S(x)$ for $x\in A$. 

To have \eqref{extracond} we will define two auxiliary maps $\tilde T^1,\tilde T^2\colon X\to X\times X$ as
\[\tilde T^1(x)\coloneqq \left\{\begin{array}{cc}(x,T(x)),& x\in h^{-1}(-\infty,0)\\(x,S(x)),& x\in h^{-1}[0,\infty),\end{array}\right.\]
where $h(x)\coloneqq d(x,T(x))-d(x,S(x))$, and similarly
\[\tilde T^2(x)\coloneqq \left\{\begin{array}{cc}(x,S(x)),& x\in h^{-1}(-\infty,0)\\(x,T(x)),& x\in h^{-1}[0,\infty).\end{array}\right.\]
The maps $\tilde T^1$ and $\tilde T^2$ are Borel maps since $T,S$ and $h$ are Borel maps.

It remains to select the geodesics between points $x$ and $T^i(x)$. For that, we consider the set 
\[G\coloneqq \{(x,y,\gamma)\in X\times X\times \geo(X)\,:\, \gamma(0)=x,\gamma(1)=y\}.
\]
The set $G$ is Borel as the preimage of zero under the Borel map 
\[
(x,y,\gamma)\mapsto \sup\{d(x,\gamma(0)),d(y,\gamma(1))\}.
\]
Furthermore, we have by the Arzel\`a-Ascoli Theorem that 
\[
G_{(x,y)} \coloneqq \{\gamma\in \geo(X) \,:\, \gamma(0)=x,\gamma(1)=y\}.
\]
is compact. Thus, by the Arsenin-Kunugui Theorem there exists a Borel selection $F\colon X\times X\to G_{(x,y)}$. With this we may finally define $T^1,T^2\colon X\to \geo(X)$ as 
\begin{align}T^1&\coloneqq F\circ \tilde T^1\quad\mathrm{and}
\\ T^2&\coloneqq F\circ \tilde T^2.\end{align}
From now on, we will denote $\gamma^1_x=T^1(x)$ and $\gamma^2_x=T^2(x)$ for all $x\in A$. Notice that $\gamma_x^1$ and $\gamma_x^2$ satisfy \eqref{extracond}.

By Lemma \ref{lma:positive_distance_limit}, we have for all $x \in A$ that
\[\lim_{t\searrow 0}\frac{d(\gamma^1_x(t),\gamma^2_x(t))}{d(x,\gamma_x^2(t))}\in(0,\infty).\]
Thus, we may write $A$ as a countable union of sets 
\begin{align*}
A_i\coloneqq\bigg\{x\in A \,:\,  d(x,\gamma^2_x(1))\in\left[1/i,i\right]\text{ and }
&\frac{d(\gamma^1_x(t),\gamma^2_x(t))}{d(x,\gamma^2(t))}\in\left[1/i,i\right]\text{ for all } t\le\frac{1}i\bigg\},
\end{align*}
and therefore there exists $k\in\N$ so that $\mu_0(A_k)>0$. Notice that the sets $A_i$ are measurable, since we can write $A_i$ as the intersection of
\[
\left\{x\in A: d(x,\gamma_x^2(1))\in[1/i,i]\right\}
\]
and
\begin{align}
\bigcap_{\substack{t\le\frac1i\\t \in \mathbb Q}}\left\{x\in X:\frac{d(\gamma_x^1(t),\gamma_x^2(t))}{d(x,\gamma_x^2(t))}\in[1/i,i]\right\}.
\end{align}
We now consider $k \in \N$ fixed so that $\mu_0(A_k) > 0$.

\noindent
\textbf{Step 2: localization to a chart}\\
Now we are ready to localize the problem so that we may use properties of the Euclidean space to arrive to the contradiction. 
We will need to choose $\varepsilon>0$ sufficiently small to arrive to a contradiction with c-cyclical monotonicity in a $(1+\varepsilon)$-chart given by Theorem \ref{thm:epsilonneighborhood}. 
We define
\begin{equation}\label{eq:epsilonchoice}
\varepsilon := \frac{\delta}{100} \in (0,1/200),
\end{equation}
where $\delta=\delta(2k) \in (0,1/2)$ is the constant given by Lemma \ref{lma:cyclicalcontradicition} for the $k$ fixed above.
%
%
Since $\mu_0$ is purely $(n-1)$-unrectifiable and $\Sing(X)$ is $(n-1)$-rectifiable by Theorem \ref{thm:singsize}, we have $\mu_0(A_k\cap \Reg(X))= \mu_0(A_k)$. By Theorem \ref{thm:epsilonneighborhood} we can cover the set $\Reg(X)$ with open sets $U$ for which the associated maps $\varphi\colon U\to \R^n$ are $(1+\varepsilon)$-biLipschitz, and the limit  
\[
  \lim_{t \searrow 0}\frac{\varphi(\gamma(t))-\varphi(\gamma(0))}{d(\gamma(t),\gamma(0))}
\]
exists for all geodesics $\gamma\subset U$. Since $X$ is a proper metric space, it is in particular hereditarily Lindel\"of. Therefore, there exists a countable subcover $\mathcal{F}$ of such open sets $U$. Hence, there exists $U\in\F$ for which $\mu_0(U\cap A_k)>0$. Let $\varphi\colon U\to\R^n$ be as in Theorem \ref{thm:epsilonneighborhood}.

\noindent
\textbf{Step 3: discretization and choice of points for the contradiction}\\
Next we take a subset of $A_k\cap U$ where the direction of the two selected geodesics is independent of the point, up to a small error 
\begin{equation}\label{eq:epsilonhatchoice}
\hat\varepsilon\coloneqq \frac{\varepsilon}{80k^4}>0.
\end{equation}
This is done by covering the set $\R^n$ by sets $\{B(y_i,\hat\varepsilon)\}_{i\in\N}$.
Then there exist $i$, $j$ and $t_0>0$ so that the set 
\begin{align*}
B\coloneqq\bigg\{x\in A_k\cap U\,:\, & \frac{\varphi(\gamma^1_x(t))-\varphi(x)}t\in B(y_i,\hat\varepsilon), \frac{\varphi(\gamma^2_x(t))-\varphi(x)}t\in B(y_j,\hat\varepsilon),\\
& \varphi(\gamma^1_x(t)), \varphi(\gamma^2_x(t)) \in U\text{ for all } t\le t_0\bigg\}
\end{align*}
has positive $\mu_0$-measure. Notice that $B$ is seen to be measurable by a similar argument than $A_i$. By relabeling, we may assume that $i=1$ and $j=2$.

Since $\varphi$ is biLipschitz, the measure $\varphi_\#\mu_0$ is purely $(n-1)$-unrectifiable on $\R^n$. Hence, by Lemma \ref{lma:unrecteverywhere} there exist points $x_1,x_2\in B$ such that
\begin{equation}\label{eq:cone}
\varphi(x_2)\in C\left(\varphi(x_1),\frac{y_2-y_1}{|y_2-y_1|}, \hat\varepsilon\right)\cap B(\varphi(x_1),r),
\end{equation}
where $r\le \hat\varepsilon$ is such that $r\le \frac{t_0}{2}|y_2-y_1|$. 
Now that we have selected the initial points $x_1$ and $x_2$ for the contradiction argument, we still need to
bring the target points close enough to $x_1$ and $x_2$ by contracting along the geodesics $\gamma_{x_1}^2$ and $\gamma_{x_2}^1$.
Since $|\varphi(x_1)-\varphi(x_2)|<r$, there exists the desired contraction parameter $t\le t_0$ for which 
\begin{equation}\label{eq:tchoice}
2|\varphi(x_2)-\varphi(x_1)|= |ty_2-ty_1|.
\end{equation}
We will now use as target points the points $\gamma_{x_1}^2(t)$ and $\gamma_{x_2}^1(t)$.

\noindent
\textbf{Step 4: verifying the bounds for Lemma \ref{lma:cyclicalcontradicition}}\\
In the remainder of the proof we verify that the four selected points $x_2,x_1,\gamma_{x_1}^2(t)$ and $\gamma_{x_2}^1(t)$ give a contradiction with $c$-cyclical monotonicity.
Towards this goal we first check that we may apply Lemma \ref{lma:cyclicalcontradicition} with the selected $\delta$.

First of all, we have by the definition of $A_k$ that
\begin{align}
\frac{|\varphi(\gamma_{x_1}^2(t))-\varphi(\gamma_{x_1}^1(t))|}{|\varphi(\gamma_{x_1}^2(t))-\varphi({x_1})|}\in \left[\frac1{(1+\varepsilon)^2k},(1+\varepsilon)^2k\right].
\end{align}
Since 
\[
\hat\varepsilon\le \frac{\varepsilon}{2(1+\varepsilon)k^2},
\]
we have by the fact that $x_1 \in A_k$ and $\varphi$ is $(1+\varepsilon)$-biLipschitz, that 
\begin{align}2t\hat\varepsilon\le \frac{\varepsilon}{(1+\varepsilon)}\frac{d(\gamma_{x_1}^2(t),x_1)}{k}\le\frac{\varepsilon d(\gamma_{x_1}^2(t),\gamma_{x_1}^1(t))}{(1+\varepsilon)}\le\varepsilon|\varphi(\gamma_{x_1}^2(t))-\varphi(\gamma_{x_1}^1(t))|.\end{align}
Similarly, since $\hat\varepsilon\le \frac{\varepsilon}{(1+\varepsilon)k}$, we have that
\[t\hat\varepsilon\le\varepsilon|\varphi(\gamma_{x_1}^2(t))-\varphi(x_1)|.\]
Therefore, we have by the fact that $x_1 \in B$, the triangle inequality and the choice of $\varepsilon$ and $\hat\varepsilon$ that 
\begin{align}
\frac{|ty_2-ty_1|}{|ty_2|}&\le\frac{|\varphi(\gamma_{x_1}^2(t))-\varphi(\gamma_{x_1}^1(t))|+2t\hat\varepsilon}{|\varphi(\gamma_{x_1}^2(t))-\varphi(x_1)|-t\hat\varepsilon}\\
&\le\frac{(1+\varepsilon)}{(1-\varepsilon)}\frac{|\varphi(\gamma_{x_1}^2(t))-\varphi(\gamma_{x_1}^1(t))|}{|\varphi(\gamma_{x_1}^2(t))-\varphi(x_1)|}\\
&\le\frac{(1+\varepsilon)}{(1-\varepsilon)}(1+\varepsilon)^2k< 2k
\end{align}
By similar arguments, we have that
\[\frac{|ty_2-ty_1|}{|ty_2|}>\frac{1}{2k}.\]
Thus, by Lemma \ref{lma:cyclicalcontradicition} with the $\delta=\delta(2k)$ already chosen accordingly, we have
\begin{align}\frac{\frac12|t(y_1+y_2)|^2}{|ty_2|^2}<(1-\delta)\frac{(|ty_2|^2+|ty_1|^2)}{|ty_2|^2},
\end{align}
that is,
\begin{align}
\label{cyc}\frac12|t(y_1+y_2)|^2<(1-\delta)(|ty_2|^2+|ty_1|^2).
\end{align}

\noindent
\textbf{Step 5: the contradiction}\\
We will then use the inequality \eqref{cyc} to get to a contradiction with the c-cyclical monotonicity guaranteed by Lemma \ref{lma:localcyclical}. Let us first estimate the terms on the right-hand side of \eqref{cyc}.

By the definition of $y_1$ and $A_k$ we have that
\begin{align}|ty_1|&\le|ty_1-\varphi(\gamma_{x_1}^1(t))+\varphi(x_1)|+|\varphi(\gamma_{x_1}^1(t))-\varphi(x_1)|\\
&\le t\hat\varepsilon+(1+\varepsilon)d(\gamma_{x_1}(t),x_1)\le tk+(1+\varepsilon)tk\le 3tk.\end{align}
Similarly, \[|ty_2|\le 3tk.\]
Therefore, we have that
\begin{align}\label{sumdifestim}|\frac12t(y_1+y_2)|,|\frac12t(y_2-y_1)|\le 3tk.\end{align}

Using the definition of the set $B$, and \eqref{eq:tchoice}, \eqref{eq:cone} and \eqref{sumdifestim}, we have
\begin{align}
\frac1{(1+\varepsilon)^2}&d^2(x_2,\gamma^2_{x_1}(t)) \le |\varphi(\gamma^2_{x_1}(t))-\varphi(x_2)|^2\\
&=|\frac12t(y_1+y_2)+(\varphi(\gamma^2_{x_1}(t))-\varphi(x_1)-ty_2)
-(\varphi(x_2)-\varphi(x_1)-\frac12t(y_2-y_1))|^2\\
&\le(|\frac12t(y_1+y_2)|+|\varphi(\gamma^2_{x_1}(t))-\varphi(x_1)-ty_2|
+|\varphi(x_2)-\varphi(x_1)-\frac12t(y_2-y_1)|)^2\\
&\le (|\frac12t(y_1+y_2)|+t\hat\varepsilon+\frac12|t(y_2-y_1)|\hat\varepsilon)^2 
\le (|\frac12t(y_1+y_2)|+(3k+1)t\hat\varepsilon)^2 \\
& \le |\frac12t(y_1+y_2)|^2 + 6tk (3k+1)t\hat\varepsilon + ((3k+1)t\hat\varepsilon)^2
\le |\frac12t(y_1+y_2)|^2 + 40 t^2k^2\hat\varepsilon
\end{align}
and similarly
\begin{align}
\frac1{(1+\varepsilon)^2}d^2(x_1,\gamma^2_{x_2}(t))\le |\frac12t(y_1+y_2)|^2 + 40 t^2k^2\hat\varepsilon.
\end{align}

Thus, by summing the two terms, using \eqref{eq:epsilonhatchoice} and the fact that $x_1 \in A_k$,
\begin{equation}\label{error}
\begin{split}
\frac1{(1+\varepsilon)^2}&[d^2(x_2,\gamma^2_{x_1}(t))+d^2(x_1,\gamma^1_{x_2}(t))]
 \le 2|\frac12t(y_1+y_2)|^2+80 t^2k^2\hat\varepsilon \\
& \le \frac12|t(y_1+y_2)|^2 + \frac{t^2}{k^2}\varepsilon \le \frac12|t(y_1+y_2)|^2 + \varepsilon d^2(\gamma^2_{x_1}(t),x_1),
\end{split}
\end{equation}
%
%

%
%
Again,  by the definition of the set $B$ and the choice of $\hat\varepsilon$ 
\begin{align}
|ty_1|^2& \le \left((1+\varepsilon)d(\gamma^1_{x_2}(t),x_2)+t\hat\varepsilon\right)^2\le \left((1+\varepsilon)d(\gamma^1_{x_2}(t),x_2) + \varepsilon d(\gamma^2_{x_1}(t),x_1)\right)^2 \\
& \le((1+\varepsilon)^2+2(1+\varepsilon)\varepsilon)d^2(\gamma^1_{x_2}(t),x_2) 
+(\varepsilon^2 +2(1+\varepsilon)\varepsilon) d^2(\gamma^2_{x_1}(t),x_1)\label{tyestim1}\\
& \le (1 + 7\varepsilon) d^2(\gamma^1_{x_2}(t),x_2) + 5\varepsilon d^2(\gamma^2_{x_1}(t),x_1)
\end{align}
and
\begin{equation}
\begin{split}
\label{tyestim2}
|ty_2|^2 &\le \left((1+\varepsilon)d(\gamma^2_{x_1}(t),x_1)+t\hat\varepsilon\right)^2\\
& \le (1+2\varepsilon)^2d^2(\gamma^2_{x_1}(t),x_1) \le (1+8\varepsilon) d^2(\gamma^2_{x_1}(t),x_1).
\end{split}
\end{equation}
Using the inequalities \eqref{cyc}, \eqref{tyestim1} and \eqref{tyestim2}, we get that
\begin{equation}
\begin{split}
\label{sumestim}
\frac12|t(y_1+y_2)|^2 & <(1-\delta)(|ty_2|^2+|ty_1|^2)\\ 
& \le (1-\delta)(1+13 \varepsilon)(d^2(\gamma^2_{x_1(t)},x_1)+d^2(\gamma^1_{x_2(t)},x_2)).
\end{split}
\end{equation}
Hence, by \eqref{error}, \eqref{sumestim}, the fact that $\delta \le \frac12$ and the choice of $\varepsilon$, we have that 
\begin{align}
d^2(x_2,&\gamma^2_{x_1}(t))+d^2(x_1,\gamma^1_{x_2}(t))\\
& \le (1+\varepsilon)^2\left(\frac12|t(y_1+y_2)|^2 + \varepsilon d^2(\gamma^2_{x_1}(t),x_1)\right)\\
& \le (1+\varepsilon)^2(1-\delta)(1+15\varepsilon)(d^2(\gamma^2_{x_1}(t),x_1)+d^2(\gamma^1_{x_2}(t),x_2))\\
& \le (1-\delta)(1+100\varepsilon)(d^2(\gamma^2_{x_1}(t),x_1)+d^2(\gamma^1_{x_2}(t),x_2))\\
& <d^2(x_2,\gamma^1_{x_2}(t))+d^2(x_1,\gamma^2_{x_1}(t))
\end{align}
However, since $(x_2,\gamma_{x_2}^1(1)),(x_1,\gamma_{x_1}^2(1))\in \spt(\pi)$ we have by Lemma \ref{lma:localcyclical} that 
\begin{align}\label{cycmonotforcontradiction}
d^2(x_2,\gamma^1_{x_2}(t))+d^2(x_1,\gamma^2_{x_1}(t))\le d^2(x_2,\gamma^2_{x_1}(t))+d^2(x_1,\gamma^1_{x_2}(t)).
\end{align}
which is a contradiction.
Therefore, the plan $\pi$ is induced by a map.

\bibliographystyle{amsplain}
\bibliography{OTreference}
\end{document}